\newtheorem{theorem}{Theorem}[section]
\newtheorem{lemma}[theorem]{Lemma}     
\newtheorem{cor}[theorem]{Corollary}
\newtheorem{definition}[theorem]{Definition}
\newcommand{\secref}[1]{Section~\ref{#1}}
\newcommand{\thmref}[1]{Theorem~\ref{#1}}
\newcommand{\lemref}[1]{Lemma~\ref{#1}}
\newcommand{\corref}[1]{Corollary~\ref{#1}}
\numberwithin{equation}{section}
\newtheorem*{theorema}{Theorem JKK}
\newtheorem*{definitiona}{Definition}
\begin{document}

\title[$z$-Classes in finite groups of conjugate type $(n,1)$ ]{$z$-Classes in finite groups of conjugate type $(n,1)$}

\author[S. Arora]{Shivam Arora}
\author[K. Gongopadhyay]{Krishnendu Gongopadhyay}

\address{Indian Institute of Science Education and Research (IISER) Mohali, Knowledge City, ,
Sector 81,  S.A.S. Nagar, Punjab 140306, India}
\email{shivam.iiserm@gmail.com}

\address{Indian Institute of Science Education and Research (IISER) Mohali, Knowledge City, 
Sector 81,  S.A.S. Nagar, Punjab 140306, India}

\email{krishnendug@gmail.com, krishnendu@iisermohali.ac.in}
\subjclass[2010]{20D15   (Primary);   20E45 (Secondary)}
\keywords{ conjugacy classes of centralizers, $z$-classes, $p$-groups, extraspecial groups.  }

\begin{abstract}
Two elements in a group $G$ are said to \emph{$z$-equivalent} or to be in the same \emph{$z$-class} if their centralizers are conjugate in $G$. In \cite{kkj}, it was proved that a non-abelian $p$-group $G$ can have at most $\frac{p^k-1}{p-1} +1$ number of $z$-classes, where $|G/Z(G)|=p^k$. In this note, we characterize the $p$-groups of conjugate type $(n,1)$ attaining this maximal number. As a corollary, we characterize $p$-groups having prime order commutator subgroup and maximal number of $z$-classes. 
\end{abstract}
\maketitle
\section{Introduction} 
Two elements in a group $G$ are said to \emph{$z$-equivalent} or to be in the same \emph{$z$-class} if their centralizers are conjugate in $G$. The $z$-equivalence is an equivalence relation in $G$ and give a partition of $G$ into disjoint subsets. It is coarser than the conjugacy relation. An infinite group may have infinitely many conjugacy classes, but often the number of $z$-classes is finite and this finiteness of $z$-classes give a rough idea about dynamical types in a homogeneous space on which $G$ acts, see \cite{kul1}. There have been some recent work where $z$-classes in various groups have been investigated, see \cite{rony, kulaj, singh, gk1, krish, gk2}.  We note here that a closely related problem is to compute the \emph{genus number} that has been investigated in \cite{bose}. Jadhav and Kitture has classified $z$-classses in $p$-groups of order up to $p^5$, \cite{jaki}. 

In a recent work Jadhav, Kitture and Kulkarni \cite{kkj} has investigated $z$-classes in finite $p$-groups. They proved that a non-abelian $p$-group has at least $p+2$ $z$-classes and characterized the groups that attains this lower bound. They also proved that a non-abelian $p$-group can have at most $\frac{p^k-1}{p-1}+1$ number of $z$-classes, where $|G/Z(G)|=p^k$. They proved the following theorem that gives necessary conditions for a $p$-group $G$ to attain this maximal number of $z$-classes. 

\begin{theorema}\cite{kkj}
	Let $G$ be a non-abelian group with $|G/Z(G)|=p^k$. If the number of z-classes in $G$ is $\frac{p^k-1}{p-1} +1 $ then either $G/Z(G) \cong C_p \times C_p$ or the following holds:\\
	(1) $G$ has no abelian subgroup of index $p$.\\
	(2) $G/Z(G)$ is an elementary abelian $p$-group.
\end{theorema}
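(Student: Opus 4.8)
The plan is to re-examine the upper bound $\frac{p^k-1}{p-1}+1$ closely enough to read off exactly when it is attained, and then extract conditions (1)--(2) from the equality case. Write $Z=Z(G)$ and $V=G/Z$, so $|V|=p^k$. The first observation I would record is that for a non-central $g$ the centralizer $C_G(g)$ depends only on the subgroup $\langle g,Z\rangle$: if $\langle g,Z\rangle=\langle h,Z\rangle$ then each of $g,h$ is a product of a power of the other with a central element, whence $C_G(g)=C_G(h)$. Thus the centralizer, and a fortiori the $z$-class, of $g$ is a function of the cyclic subgroup $\langle\bar g\rangle\le V$ alone, giving a surjection from the nontrivial cyclic subgroups of $V$ onto the $z$-classes of non-central elements.

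Next I would recover the bound by counting generators. Every nontrivial cyclic subgroup of order $p^j$ has $\phi(p^j)\ge p-1$ generators, and the $p^k-1$ non-identity elements of $V$ are partitioned among these generator sets, so $V$ has at most $\frac{p^k-1}{p-1}$ nontrivial cyclic subgroups, with equality if and only if every such subgroup has order $p$, i.e. $V$ has exponent $p$. Adding the single central $z$-class $Z$ reproduces the bound and shows that equality forces two things at once: $V$ has exponent $p$ (so its nontrivial cyclic subgroups are exactly its $\frac{p^k-1}{p-1}$ lines), and the surjection above is in fact a bijection between lines and non-central $z$-classes. The key consequence I would isolate is that, in the equality case, the $z$-class of any non-central $g$ is exactly $\langle g,Z\rangle\setminus Z$, of size $(p-1)|Z|$.

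From this description both conclusions follow. For (2), that $V$ is elementary abelian, I use that a $z$-class is a union of conjugacy classes, hence conjugation-invariant; therefore $\langle g,Z\rangle\setminus Z$, and so $\langle g,Z\rangle$, is normal in $G$, which means each $\langle\bar g\rangle$ is a normal subgroup of $V$. A $p$-group acting by conjugation on a subgroup of order $p$ maps into $\mathrm{Aut}(C_p)\cong C_{p-1}$, and since $p\nmid p-1$ this action is trivial; thus every line is central, $V=Z(V)$ is abelian, and being of exponent $p$ it is elementary abelian. For (1), if $G$ had an abelian subgroup $A$ of index $p$, then $Z\le A$ and, by maximality of $A$, every element of $A\setminus Z$ would have centralizer exactly $A$; these $(p^{k-1}-1)|Z|$ elements would all lie in one $z$-class, which is compatible with the size $(p-1)|Z|$ only when $k=2$. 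Hence for $k\ge 3$ no such $A$ exists. Assembling the dichotomy: $k=1$ is impossible since $G$ is non-abelian, $k=2$ together with exponent $p$ forces $V\cong C_p\times C_p$, and for $k\ge 3$ both (1) and (2) hold.

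I expect the main obstacle to be the passage recorded at the end of the second paragraph, namely turning the numerical equality into the set-theoretic identity that the $z$-class of $g$ equals $\langle g,Z\rangle\setminus Z$. One must argue that the bijection between lines and $z$-classes really identifies each $z$-class, \emph{as a subset of $G$}, with the elements lying on a single line; this is where the exponent-$p$ count and the bijection must be combined carefully. Once this identity is in hand, the coprimality argument for (2) and the cardinality contradiction for (1) are both short.
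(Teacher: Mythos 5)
This statement is the one result in the paper that has no proof to compare against: Theorem JKK is quoted verbatim from \cite{kkj} and used as a black box (its only role is to supply condition (1) in the proof of Theorem 1.1), so your argument can only be judged on its own merits. On those merits it is correct, at least in the finite setting, and it is a genuinely self-contained route. The chain of reductions is sound: $C_G(g)$ depends only on $\langle g,Z\rangle$, hence only on the cyclic subgroup $\langle\bar g\rangle\le V=G/Z$; counting generators shows there are at most $\frac{p^k-1}{p-1}$ nontrivial cyclic subgroups, with equality exactly when $V$ has exponent $p$; and attaining the bound forces both this equality and injectivity of the map from cyclic subgroups to $z$-classes. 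The step you flag as the main obstacle is in fact immediate: the $z$-class map on non-central elements factors as (element) $\mapsto\langle\bar g\rangle\mapsto$ ($z$-class), and once the second arrow is injective, two non-central elements are $z$-equivalent if and only if they span the same line, which is precisely the set identity ($z$-class of $g$) $=\langle g,Z\rangle\setminus Z$. From there, conjugation-invariance of $z$-classes makes each $\langle g,Z\rangle$ normal, and the coprime action on $\mathrm{Aut}(C_p)$ gives (2); the comparison with an abelian index-$p$ subgroup gives (1) for $k\ge 3$; the assembly of the dichotomy is also right (your argument in fact shows (2) holds even when $k=2$, which is consistent, since an elementary abelian group of order $p^2$ is $C_p\times C_p$).

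One weakness worth fixing: your proof of (1) compares cardinalities, $(p^{k-1}-1)|Z|\le(p-1)|Z|$. This is fine when $G$ is finite (the setting of this paper), but the statement as quoted assumes only that $|G/Z|$ is finite, and if $Z$ is infinite the inequality is vacuous. The repair uses only what you already proved: for $a\in A\setminus Z$, every element of $A\setminus Z$ has centralizer $A$, so $A\setminus Z\subseteq\langle a,Z\rangle\setminus Z$ by your set identity, whence $A=\langle a,Z\rangle$, $[A:Z]=p$, and $k=2$ — no counting needed. With that substitution your proof is complete and arguably buys more than the citation does: it re-derives the upper bound $\frac{p^k-1}{p-1}+1$ of \cite{kkj} along the way, and its key identity is the same mechanism (in the paper's notation, $F'_x=\langle x,Z\rangle\setminus Z$) that drives the paper's own Lemma 3.1 and the proof of Theorem 1.1, so it meshes naturally with the rest of the note.
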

The authors further gave a counterexample to show that conditions (1) and (2) are not sufficient to attain this bound. It would be interesting to obtain sufficient conditions for a group $G$ to attain the maximal number of $z$-classes and to characterize such groups.   

\medskip In this note, we characterize groups of type $(n,1)$ attaining this bound. To state our result, we recall the following definition by Ito \cite{ito1}. 

\begin{definitiona} \cite{ito1} Let $G$ be a finite group. Let $n_1,n_2,\ldots, n_r$ , where $n_1>n_2>\ldots>n_r=1$, be all the numbers which are the indices of the centralizers of elements of $G$ in $G$. The vector $(n_1,n_2,\ldots,n_r)$ is called conjugate type vector of $G$. A group with the conjugate type vector $(n_1,n_2,\ldots, n_r)$ is called a group of type $(n_1,n_2,\ldots, n_r)$.
\end{definitiona}

In \cite{ito1}, Ito characterizes the groups of type $(n,1)$. Any such group is nilpotent and $n=p^a$ for some prime $p$. A group of type $(p^a,1)$ is the direct product of a $p$-group of the same type and of an abelian group. Thus,  the study of groups of type $(n,1)$ can be reduced to that of $p$-groups of type $(p^a,1)$.  Ito showed that such a group $G$ contains an abelian normal subgroup $A$ such that $G/A$ is of exponent $p$. 

\medskip Here we prove the following. 
\begin{theorem}\label{mt}
	Let $G$ be a non-abelian group of type $(n,1)$ and $[G:Z(G)]=p^k$. Then $G$ has $\frac{p^k-1}{p-1} +1 $  $z$-classes if and only if
\begin{enumerate}
	\item $G/Z(G)$ is elementary abelian, and 
	\item for all $x\in G\setminus Z(G)$, $Z(C_G(x))=\langle x, Z(G) \rangle$.
\end{enumerate}
\end{theorem}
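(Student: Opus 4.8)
The plan is to set up an explicit bijection between the non-central $z$-classes of $G$ and the $\frac{p^k-1}{p-1}$ subgroups of order $p$ (the ``lines'') of the $\mathbb{F}_p$-vector space $\bar G := G/Z(G)$; the single $z$-class consisting of all central elements (whose centralizer is $G$ itself, a value taken by no non-central element since $n>1$) accounts for the remaining ``$+1$''. Before anything else I would reduce to the case that $G$ is a $p$-group: by Ito's structure theorem a group of type $(n,1)$ splits as $P\times A$ with $P$ a $p$-group of the same type and $A$ abelian, and one checks that both the number of $z$-classes and the validity of conditions (1),(2) are unchanged in passing from $G$ to $P$, so that Theorem JKK applies. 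A second preliminary observation does a great deal of work: whenever $\bar G$ is abelian, $G$ has nilpotency class at most $2$, and then $C_G(gxg^{-1})=gC_G(x)g^{-1}=C_G(x)$, since $gxg^{-1}x^{-1}\in[G,G]\le Z(G)$ shows that $gxg^{-1}$ and $x$ differ by a central element. Hence in this setting two elements are $z$-equivalent if and only if they have literally the same centralizer, which removes all conjugacy bookkeeping from the problem.

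The core is a rigidity coming from type $(n,1)$. If $x,y\notin Z(G)$ lie on the same line, i.e. $\langle\bar x\rangle=\langle\bar y\rangle$, then $y=x^i z$ with $z\in Z(G)$ and $x^i\notin Z(G)$, so $C_G(x)\le C_G(x^i)=C_G(y)$; since every non-central element has centralizer of index exactly $n$, these two centralizers have equal order and must coincide. Thus same line $\Rightarrow$ same centralizer $\Rightarrow$ $z$-equivalent, and the assignment $\langle\bar x\rangle\mapsto[x]_z$ defines a well-defined and obviously surjective map $\Phi$ from the set of lines onto the set of non-central $z$-classes. I would stress that this step uses only that $\bar G$ is elementary abelian together with the defining index condition of type $(n,1)$; condition (2) is not needed here.

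For the forward implication, assume $G$ attains the maximal number $\frac{p^k-1}{p-1}+1$ of $z$-classes. Condition (1) is then immediate from Theorem JKK, both of whose alternatives ($\bar G\cong C_p\times C_p$, or conditions (1)–(2) of that theorem) force $\bar G$ to be elementary abelian. Consequently the number of lines is exactly $\frac{p^k-1}{p-1}$, which equals the number of non-central $z$-classes, so the surjection $\Phi$ between two finite sets of equal cardinality is automatically a bijection; in particular $x\sim_z y$ forces $\langle\bar x\rangle=\langle\bar y\rangle$. To deduce (2) I would fix a non-central $x$ and take $w\in Z(C_G(x))$: then $C_G(x)\le C_G(w)$, and if $w\notin Z(G)$ the index condition again forces $C_G(w)=C_G(x)$, so $w\sim_z x$, hence $w$ lies on the line of $x$, i.e. $w\in\langle x,Z(G)\rangle$; the case $w\in Z(G)$ is trivial. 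Since the reverse inclusion $\langle x,Z(G)\rangle\subseteq Z(C_G(x))$ always holds, this gives (2).

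For the converse, assume (1) and (2). By (1) we are again in the class-$\le 2$ situation, so $x\sim_z y$ means $C_G(x)=C_G(y)$, whence $Z(C_G(x))=Z(C_G(y))$, and (2) converts this into $\langle x,Z(G)\rangle=\langle y,Z(G)\rangle$, i.e. $x$ and $y$ lie on the same line. Thus $\Phi$ is also injective, hence bijective, and counting gives precisely $\frac{p^k-1}{p-1}$ non-central classes together with the one central class. The part I expect to require the most care is not any single computation but keeping the logical roles of the two hypotheses straight: type $(n,1)$ (through index rigidity) is what makes $\Phi$ well-defined and surjective and is responsible for the ``same line $\Rightarrow$ $z$-equivalent'' direction, whereas condition (2) is exactly the input that yields injectivity. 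In particular, in the maximal case injectivity is purchased for free by the cardinality count rather than by (2), and it is in the converse that (2) does the real work; disentangling these is the delicate point.
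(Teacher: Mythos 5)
Your proposal is correct, and its skeleton coincides with the paper's: condition (1) is pulled from Theorem JKK, normality of centralizers (class $\le 2$) reduces $z$-equivalence to literal equality of centralizers, elements on the same line of $G/Z(G)$ are shown to share a centralizer, and the type-$(n,1)$ index rigidity drives the forward direction of (2). Still, three of your choices genuinely differ from the paper and are worth recording. First, you reduce to $p$-groups via Ito's decomposition $G\cong P\times A$ and check invariance of the hypotheses directly, whereas the paper reduces via isoclinism (citing \cite{kkj} for invariance of the $z$-class count); your route is more self-contained here, since invariance of type $(n,1)$ and of condition (2) under a direct abelian factor is immediate, while under isoclinism it is not spelled out. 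Second, you organize the count as a bijection $\Phi$ between lines of $G/Z(G)$ and non-central $z$-classes, which lets you avoid Kulkarni's class-size formula $|z\hbox{-class of } x|=[G:N_G(C_G(x))]\cdot|F'_x|$ that the paper's Lemma 3.1 relies on; this is a more elementary bookkeeping of the same information (each non-central class is exactly a punctured line's preimage). Third, and this is the one substantive trade-off: your backward direction uses type $(n,1)$ (through the rigidity step that makes $\Phi$ well defined), whereas the paper's Lemma 3.1 proves $C_G(x)=C_G(x^i)$ purely from exponent $p$ of $G/Z(G)$ and so holds for \emph{all} $p$-groups satisfying (1) and (2); that extra generality is what the paper reuses in its Corollary 3.2 (groups with no abelian subgroup of order exceeding $p|Z(G)|$), which your version of the converse would not directly yield. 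If you replace the rigidity step in your converse by the observation that $x^p\in Z(G)$ forces $C_G(x^i)\subseteq C_G(x^{ij})=C_G(x)$ for $ij\equiv 1 \pmod p$, you recover the paper's stronger lemma with no extra cost.
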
  

As an application of the above result we have the following. 
\begin{cor}\label{est}
	Let $G$ be a non-abelian group  with $|G/Z(G)|=p^k$, $(k\geq 2)$ and $|G'|=p$. Then the number of z-classes in $G$ is $\frac{p^k-1}{p-1} +1 $ if and only if  G is isoclinic to an extraspecial $p$-group. 
\end{cor}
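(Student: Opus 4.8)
The plan is to deduce the corollary from Theorem~\ref{mt}, so the first task is to check that a group $G$ with $|G'| = p$ and $|G/Z(G)| = p^k$ is of conjugate type $(n,1)$. Since $G/Z(G)$ is a $p$-group it is nilpotent, and a group whose central quotient is nilpotent is itself nilpotent; hence $G$ is nilpotent. As $G'$ is a nontrivial normal subgroup, $G' \cap Z(G) \neq 1$ in a nilpotent group, and $|G'| = p$ then forces $G' \le Z(G)$, i.e.\ $G$ has class $2$. For a class-$2$ group the map $\phi_x \colon y \mapsto [x,y]$ is a homomorphism $G \to G'$ with kernel $C_G(x)$; for non-central $x$ its image is a nontrivial subgroup of $G' \cong C_p$, hence all of $G'$, so $[G : C_G(x)] = p$. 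Thus every non-central element has centralizer index $p$, $G$ is of type $(p,1)$, and Theorem~\ref{mt} applies.

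Next I would show that, in the presence of $|G'| = p$, the two conditions of Theorem~\ref{mt} collapse to condition~(1) alone, because (1) already implies (2). Assuming (1), the quotient $V := G/Z(G)$ is an $\F_p$-vector space, and the commutator induces a well-defined alternating bilinear form $\omega \colon V \times V \to G' \cong \F_p$, $\omega(\bar x, \bar y) = [x,y]$, which is non-degenerate since its radical is exactly $Z(G)/Z(G) = 0$. For non-central $x$ one has $C_G(x)/Z(G) = \bar x^{\perp}$, and an element lies in $Z(C_G(x))$ iff its image lies in the radical of $\omega$ restricted to $\bar x^{\perp}$, i.e.\ in $\bar x^{\perp} \cap (\bar x^{\perp})^{\perp}$. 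By non-degeneracy $(\bar x^{\perp})^{\perp} = \langle \bar x \rangle$, and since $\omega$ is alternating $\bar x \in \bar x^{\perp}$; hence this intersection is $\langle \bar x \rangle$ and $Z(C_G(x)) = \langle x, Z(G)\rangle$, which is condition~(2). Therefore $G$ attains the maximal number of $z$-classes if and only if $G/Z(G)$ is elementary abelian.

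Finally I would identify condition~(1) with being isoclinic to an extraspecial $p$-group. If $G/Z(G)$ is elementary abelian, the non-degenerate alternating form $\omega$ forces $k = 2m$ to be even and, all non-degenerate alternating forms over $\F_p$ being equivalent, puts the triple $(G/Z(G),\, G',\, \omega)$ in a commutator-preserving correspondence with that of an extraspecial group of order $p^{1+2m}$; this is precisely an isoclinism. Conversely, isoclinism to an extraspecial $p$-group transports the elementary abelian quotient, so $G/Z(G)$ is elementary abelian. Combining the three steps yields the stated equivalence.

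I expect the main obstacle to be the middle step: packaging the centralizer data as a symplectic space and verifying that non-degeneracy alone forces $Z(C_G(x)) = \langle x, Z(G)\rangle$, i.e.\ that condition~(1) is genuinely sufficient for condition~(2) here. The surrounding points—nilpotency, class $2$, and the translation of ``elementary abelian central quotient with a non-degenerate alternating commutator form'' into the language of isoclinism—are then essentially bookkeeping, though one should be careful that $k \ge 2$ together with non-degeneracy indeed pins down $k$ as even and $G/Z(G) \cong \F_p^{2m}$.
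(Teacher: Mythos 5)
Your proposal is correct, and it reaches \corref{est} by a route that genuinely differs from the paper's in its architecture, even though the key symplectic idea is shared. The paper's forward direction does not go through \thmref{mt} at all: it invokes Hall's theorem (\thmref{hall}) to replace $G$ by an isoclinic stem group with $Z(G)\subseteq G'$, deduces $Z(G)=G'$ of order $p$, and concludes that this stem group is itself extraspecial; its converse verifies the hypotheses of \thmref{mt} only for an extraspecial group $E$ (citing Craven's Lemma 3.11 for the radical computation) and then extends to all groups isoclinic to $E$ via the isoclinism-invariance of the number of $z$-classes. You instead keep $G$ fixed throughout: (i) you observe that $|G'|=p$ alone forces class $2$ and centralizer index $p$ for every non-central element, so that $G$ itself is of type $(p,1)$ and \thmref{mt} applies to $G$ directly in both directions --- a verification the paper carries out only for extraspecial groups; (ii) you prove that condition (1) implies condition (2) by the radical argument for the commutator form, performed for arbitrary $G$ with $|G'|=p$ rather than quoted for $E$; (iii) you convert condition (1) into ``isoclinic to an extraspecial group'' explicitly, using that a non-degenerate alternating form forces even dimension, that all such forms of a fixed dimension over $\F_p$ are equivalent, and that for class-$2$ groups with commutator subgroup of order $p$ an isoclinism is exactly an isomorphism of central quotients intertwining the commutator forms. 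Your route dispenses with both Hall's theorem and the isoclinism-invariance theorem, and it isolates the cleaner intermediate equivalence that, under $|G'|=p$, the maximal number of $z$-classes holds if and only if $G/Z(G)$ is elementary abelian; the paper's route is shorter because it outsources these steps to cited results, and it identifies a concrete extraspecial representative of the isoclinism class. One small point to tighten: ``conjugate type'' is defined in the paper only for finite groups, while $G$ here need not be finite; either invoke the reduction to finite $p$-groups at the start of Section 3 (via \cite[Lemma 3.1]{kkj} and isoclinism-invariance) or note that your computation of centralizer indices is exactly what the proof of \thmref{mt} uses.
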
 
We prove these results in \secref{main}. 

\medskip We note that  \corref{est} may be compared with Theorem 5.3.1 of Kitture \cite[Section 5.3]{rt}. We thank Kitture for letting us know about his result and many comments. We are also thankful to Silvio Dolfi for useful comments and suggestions on a first draft of this note. 
\section{Preliminaries}
We recall here a few basic facts from Kulkarni \cite{kul1} that will be used in the proof. 

Let $G$ be a group. We denote $C_G(x)$ to be the centralizer of an element $x$ in $G$. 
\begin{definition}
	Let $G$ be a group. Two elements $x,y\in G$ are said to be $z$-equivalent if their centralizers are conjugate, i.e. there exist $g\in G$ such that,
	$$ g^{-1} C_G(x) g=C_G(y).$$
\end{definition}

Let $G$ be a group acting on a set $X$. For $x\in X$ let $G(x)$ denote the orbit of $x$ and $G_x$ be the stabilizer subgroup of $x$: 
$$G(x)=\{y\in X : \hbox{there exists } g\in G\ and\ y=gx\}.$$ 
$$G_x=\{g\in G : gx=x\}.$$
We have a partition of $X$ as: 
$$ X= \bigcup \limits_{x\in X} G(x).$$
 Let $y\in G(x)$ be some element, then for some $g\in G$ we have $y=gx$ and $G_y=gG(x)g^{-1}$. 

For subgroups $A$ and $B$ in $G$ write $A\sim_{g}B$ if they are conjugate. This gives us the following. 
\begin{definition}
	Let $x,y\in X$ be any two element, they are said to be in same orbit class if $G_x \sim_g G_y$. Denote this equivalence relation as $x\sim_0 y$ and denote $R(x)$ as the equivalence class of $x$.
\end{definition}
 This gives another partition of $X$:
$$X  = \bigcup \limits_{G_x \sim_g G_y} R(x)=\bigcup \limits_{G_x = G_y} R(x).$$
The second inequality is easy to note, since if $x\sim_0 y$ then for some $z\in X$, we have $C_G(x)=C_G(z)$.

\medskip 
Let $F_x$ denotes the points fixed by $G_x$ i.e. 
$$F_x=\{y\in X : G_y \supset G_x\}.$$ 
Let $F'_x$ denotes the set $F'_x=\{y\in X : G_y=G_x\}.$ Define $W_x= N_x/G_x$, where $N_x$ is the normalizer of the $G_x$ in $G$. Then we have the following theorem(refer \cite{kul1}).
\begin{theorem}
	
	Let $G$ act on a set $X$. Then the map $\phi : G/G_x \times F'_x \rightarrow R(x)$, 
	$ \phi(gG_x,y)=gy$
	is well defined and induces a bijection,
	$\bar{\phi}:(G/G_x \times F'_x)/W_x \rightarrow R(x).$
	
\end{theorem}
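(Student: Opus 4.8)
The plan is to exhibit an action of $W_x=N_x/G_x$ on the product $G/G_x\times F'_x$ for which $\phi$ is constant on orbits, and then to verify that the induced map $\bar\phi$ on the orbit space is both surjective onto $R(x)$ and injective; this turns the statement into routine orbit--stabilizer bookkeeping. First I would pin down the action: for $n\in N_x$ and $(gG_x,y)\in G/G_x\times F'_x$, set $nG_x\cdot(gG_x,y)=(gn^{-1}G_x,\,ny)$. I would check this is a well-defined left action. Since $n$ normalizes $G_x$, one has $nkn^{-1}\in G_x$ for $k\in G_x$, so the assignment $gG_x\mapsto gn^{-1}G_x$ depends only on the coset $gG_x$ and only on the coset $nG_x$; and since $G_{ny}=nG_yn^{-1}=nG_xn^{-1}=G_x$ we get $ny\in F'_x$, so $N_x$ preserves $F'_x$ and the formula descends to a genuine $W_x$-action.

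Next I would record the elementary properties of $\phi$. For well-definedness, replacing $g$ by $gh$ with $h\in G_x$ changes $\phi$ to $ghy$; but $y\in F'_x$ means $G_y=G_x$, so $hy=y$ and $ghy=gy$, giving independence of the coset representative. To see the image lies in $R(x)$, note $G_{gy}=gG_yg^{-1}=gG_xg^{-1}$, which is conjugate to $G_x$, so $gy\sim_0 x$. Finally $\phi$ is $W_x$-invariant, because $\phi(gn^{-1}G_x,ny)=gn^{-1}\cdot ny=gy=\phi(gG_x,y)$; hence $\bar\phi$ on $(G/G_x\times F'_x)/W_x$ is well defined.

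For surjectivity, take $w\in R(x)$, so $G_w=gG_xg^{-1}$ for some $g\in G$. Put $y=g^{-1}w$; then $G_y=g^{-1}G_wg=G_x$, so $y\in F'_x$ and $w=gy=\phi(gG_x,y)$. Thus $\phi$, and therefore $\bar\phi$, maps onto $R(x)$.

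The main point, and the step I expect to carry the real content, is injectivity of $\bar\phi$: I must show that two preimages of a common value lie in a single $W_x$-orbit. Suppose $g_1y_1=g_2y_2$ with $y_1,y_2\in F'_x$. Taking stabilizers of this common element and using $G_{y_i}=G_x$ yields $g_1G_xg_1^{-1}=g_2G_xg_2^{-1}$, whence $n:=g_2^{-1}g_1$ normalizes $G_x$, i.e. $n\in N_x$. The equality $g_1y_1=g_2y_2$ then gives $y_2=ny_1$, while $g_1n^{-1}G_x=g_2G_x$; comparing with the action shows $nG_x\cdot(g_1G_x,y_1)=(g_2G_x,y_2)$, so the two preimages are $W_x$-equivalent. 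The one delicate observation is precisely that the hypothesis forces $n\in N_x$, so that the natural candidate group element genuinely lies in $W_x$; once that is secured, the bijection $\bar\phi$ follows immediately.
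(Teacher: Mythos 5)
Your proof is correct and complete: the $W_x$-action $nG_x\cdot(gG_x,y)=(gn^{-1}G_x,ny)$ is well defined, $\phi$ is constant on its orbits, surjectivity follows by pulling back any $w$ with $G_w=gG_xg^{-1}$ to $y=g^{-1}w\in F'_x$, and the key step---that $g_1y_1=g_2y_2$ forces $n=g_2^{-1}g_1\in N_x$ via equality of stabilizers---is exactly what makes $\bar\phi$ injective. Note that the paper itself gives no proof of this statement (it is quoted from Kulkarni's paper \cite{kul1}), so there is nothing in-paper to compare against; your argument is the standard one that the cited source follows.
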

\noindent When we take our space $X=G$, the group itself, then $R(x)$ in above notation becomes the $z$-classes and the above theorem gives us the following equation, which is used to calculate the size of a $z$-class: for any $x\in G$ we have 
$$|z-class\ of\ x|= [G:N_G(C_G(x))].|F'_x|,$$
where $C_G(x)$ is the centralizer of $x$ and $F'_x=\{y\in G : C_G(y)=C_G(x) \}$.

\medskip 
 The following theorem states that that number of $z$-classes is invariant under the isoclinism. For basic notions on isoclinism, see \cite{hall, kkj}. 
\begin{theorem} \cite{kkj}
	Let $G_1$ and $G_2$ be isoclinic groups, with an isoclinism $(\phi, \psi)$. Then the isoclinism $(\phi, \psi)$ induces a bijection between the $z$-classes in $G_1$ and $G_2$.
\end{theorem}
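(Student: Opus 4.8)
The plan is to show that the entire $z$-class partition of a group $G$ is already encoded in the pair $\big(G/Z(G),\,G'\big)$ together with the induced commutator map, which is precisely the data that an isoclinism preserves. Thus the isomorphism $\phi:G_1/Z(G_1)\to G_2/Z(G_2)$ will be seen to transport the $z$-class structure of $G_1$ onto that of $G_2$.

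First I would observe that centralizers, and hence $z$-classes, depend only on cosets modulo the centre. Indeed, if $z\in Z(G)$ then $[g,xz]=[g,x]$ for every $g$, so $C_G(xz)=C_G(x)$; in particular each centralizer contains $Z(G)$ and depends only on $\bar x:=xZ(G)$. Writing $\overline{C}_G(\bar x):=C_G(x)/Z(G)\le G/Z(G)$, I note that every $z$-class of $G$ is a union of $Z(G)$-cosets, so the $z$-equivalence relation descends to $G/Z(G)$. Because $Z(G)$ lies in every centralizer and the correspondence between subgroups of $G/Z(G)$ and subgroups of $G$ containing $Z(G)$ preserves conjugation, the relation $g^{-1}C_G(x)g=C_G(y)$ holds in $G$ if and only if $\overline{g}^{-1}\overline{C}_G(\bar x)\,\overline{g}=\overline{C}_G(\bar y)$ holds in $G/Z(G)$. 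Consequently the set of $z$-classes of $G$ is in natural bijection with the set of $(G/Z(G))$-conjugacy classes of the subgroups $\{\overline{C}_G(\bar x):\bar x\in G/Z(G)\}$.

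Next I would bring in the isoclinism. The commutator induces a well-defined map $\theta_G:G/Z(G)\times G/Z(G)\to G'$, $\theta_G(\bar x,\bar y)=[x,y]$, and an isoclinism $(\phi,\psi)$ consists of isomorphisms $\phi:G_1/Z(G_1)\to G_2/Z(G_2)$ and $\psi:G_1'\to G_2'$ satisfying $\theta_{G_2}\big(\phi(\bar x),\phi(\bar y)\big)=\psi\big(\theta_{G_1}(\bar x,\bar y)\big)$ for all $\bar x,\bar y$. The key step is to check that $\phi$ carries centralizer subgroups to centralizer subgroups: for $\bar g,\bar x\in G_1/Z(G_1)$ one has $\bar g\in\overline{C}_{G_1}(\bar x)$ iff $\theta_{G_1}(\bar g,\bar x)=1$ iff $\psi\big(\theta_{G_1}(\bar g,\bar x)\big)=1$ (using injectivity of $\psi$) iff $\theta_{G_2}\big(\phi(\bar g),\phi(\bar x)\big)=1$ iff $\phi(\bar g)\in\overline{C}_{G_2}\big(\phi(\bar x)\big)$. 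Hence $\phi\big(\overline{C}_{G_1}(\bar x)\big)=\overline{C}_{G_2}\big(\phi(\bar x)\big)$, and since $\phi$ is surjective every centralizer subgroup of $G_2/Z(G_2)$ arises this way.

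Finally, because $\phi$ is an isomorphism of groups it maps conjugate subgroups to conjugate subgroups and distinct conjugacy classes to distinct ones; combined with the previous paragraph, $\phi$ induces a bijection between the $(G_1/Z(G_1))$-conjugacy classes of the $\overline{C}_{G_1}(\bar x)$ and the $(G_2/Z(G_2))$-conjugacy classes of the $\overline{C}_{G_2}(\bar y)$. Transporting through the bijection of the second paragraph yields the bijection between the $z$-classes of $G_1$ and those of $G_2$: the $z$-class of $x\in G_1$ is sent to the $z$-class of any $y\in G_2$ with $yZ(G_2)=\phi\big(xZ(G_1)\big)$. The only delicate point is that this assignment is well defined, that is, independent of the choice of coset representatives and of $y$; this is exactly what the commutator-compatibility $\theta_{G_2}\circ(\phi\times\phi)=\psi\circ\theta_{G_1}$ guarantees, and I expect that verification, rather than any hard computation, to be the crux of the argument.
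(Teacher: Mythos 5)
Your proof is correct and complete. Note that the paper itself gives no proof of this statement: it is quoted from \cite{kkj}, so there is no in-paper argument to compare against; your argument is the natural one and, as far as the standard treatment goes, essentially the proof in the cited source: centralizers contain the centre and depend only on cosets modulo it, the compatibility $\theta_{G_2}\circ(\phi\times\phi)=\psi\circ\theta_{G_1}$ together with injectivity of $\psi$ gives $\phi\bigl(C_{G_1}(x)/Z(G_1)\bigr)=C_{G_2}(y)/Z(G_2)$ for $\bar y=\phi(\bar x)$, and the correspondence theorem transports conjugacy of centralizers across the quotients. One small correction to your closing sentence: no delicate well-definedness check remains at the end, since independence of the choice of $y$ is exactly your first observation (that $C_{G_2}(y)$ depends only on $yZ(G_2)$), and independence of the representative $x$ of its $z$-class follows because $\phi$, being an isomorphism, carries conjugate centralizer images to conjugate ones; the commutator compatibility was already fully spent in the centralizer-matching step.
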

Finally we note the following theorem by Hall \cite{hall} that is used in proof of \thmref{est}. 
\begin{theorem}\label{hall}
	Every group is isoclinic to a group whose center is contained in the commutator subgroup. 
\end{theorem}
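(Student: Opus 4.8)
The plan is to prove the statement in its usual sharp form: for a finite group $G$ (the case relevant here), $G$ is isoclinic to a \emph{stem group}, i.e. a group $S$ with $Z(S)\subseteq S'$, and moreover one may take $S$ to be a group of least order in the isoclinism family of $G$. The two elementary operations that stay inside the family while lowering the order are the following, and I would first verify both directly from the definition of isoclinism. (i) If $H\le G$ with $HZ(G)=G$, then every commutator of $G$ is already a commutator of $H$, so $G'=H'$; moreover $Z(H)=H\cap Z(G)$ and hence $H/Z(H)\cong G/Z(G)$, so $H$ is isoclinic to $G$. (ii) If $N\trianglelefteq G$ with $N\le Z(G)$ and $N\cap G'=1$, then $(G/N)'\cong G'$, and any $gN$ central in $G/N$ satisfies $[g,G]\subseteq N\cap G'=1$, so $Z(G/N)=Z(G)/N$ and $G/N$ is isoclinic to $G$.

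First I would choose $S$ of minimal order among all groups isoclinic to $G$; it then suffices to show $Z(S)\subseteq S'$. Minimality blocks operation (i): if a maximal subgroup $M$ failed to contain $Z(S)$, then $MZ(S)=S$ would produce a strictly smaller isoclinic group, so $Z(S)\subseteq M$ for every maximal $M$, i.e. $Z(S)\subseteq\Phi(S)$. Minimality also blocks operation (ii): no nontrivial central subgroup of $S$ meets $S'$ trivially, so $Z(S)\cap S'$ intersects every nontrivial subgroup of $Z(S)$; in particular every element of prime order in $Z(S)$ already lies in $S'$.

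The hard part will be to upgrade these consequences of minimality to the full conclusion $Z(S)\subseteq S'$, and this is where I expect the main obstacle. The two elementary moves do not suffice by themselves: there are non-stem groups --- for example the modular group of order $16$, where $Z(S)=\Phi(S)\cong C_4$ while $S'\cong C_2$ --- in which $Z(S)\subseteq\Phi(S)$ holds and no nontrivial central subgroup avoids $S'$, yet $Z(S)\not\subseteq S'$; such a group is nonetheless isoclinic to a strictly smaller one (here $Q_8$) by an isoclinism realized neither as a subgroup nor as a quotient of $S$. Thus the crux is to show that whenever $Z(S)\not\subseteq S'$ the isoclinism invariants --- the commutator pairing $G/Z(G)\times G/Z(G)\to G'$ together with the power data --- can be realized by a genuinely smaller group, equivalently to construct a minimal central extension carrying this pairing with its center inside the derived subgroup. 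Once that realization is available, minimality of $|S|$ forces $Z(S)\subseteq S'$, and $S$ is the required stem group.
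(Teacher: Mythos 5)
You have correctly identified, but not closed, the gap — and the gap is the theorem. First, note that the paper offers no proof of this statement at all: it is quoted from Hall \cite{hall}, so your argument must stand on its own as a proof of the stem-group theorem. The parts you actually work out are correct: both reduction moves (passing to $H\le G$ with $HZ(G)=G$, and to $G/N$ with $N\le Z(G)$, $N\cap G'=1$) preserve the isoclinism class, your verifications of $G'=H'$, $Z(H)=H\cap Z(G)$, $(G/N)'\cong G'$, $Z(G/N)=Z(G)/N$ are right, and the modular group of order $16$ is a genuine example where $Z(S)=\Phi(S)\cong C_4$, $S'\cong C_2$, no nontrivial central subgroup avoids $S'$, and yet $S$ is isoclinic to $Q_8$, which is neither a subgroup nor a quotient of $S$. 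But the step you defer as ``the crux'' --- that whenever $Z(S)\not\subseteq S'$ the family invariants can be realized by a strictly smaller group, equivalently that one can ``construct a central extension carrying this pairing with its center inside the derived subgroup'' --- is not a remaining technicality; it is verbatim the assertion to be proved. Since any isoclinism carries $Z(X)\cap X'$ onto $Z(Y)\cap Y'$, a finite family has stem groups among its minimal-order members if and only if it contains a stem group at all, so your minimality framing reduces Hall's theorem exactly to itself. Worse, your own counterexample shows the missing construction can never be reached by iterating your two moves: it cannot be found among subgroups and quotients of $S$.

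To complete the proof you need to build a stem group from the invariants of the family, not inside $S$. One standard route is Hall's original construction in \cite{hall}. A modern one is cohomological: write $S$ as a central extension $1\to Z\to S\to Q\to 1$ with class $\alpha\in H^2(Q;Z)$, let $\beta\in\mathrm{Hom}(H_2(Q),Z)$ be the image of $\alpha$ under the universal-coefficient map, note that $\mathrm{im}(\beta)=Z\cap S'$ by the five-term exact sequence, and use the splitting of the universal-coefficient sequence to produce a class $\alpha_0\in H^2(Q;\,Z\cap S')$ whose image is the epimorphism $H_2(Q)\twoheadrightarrow Z\cap S'$ induced by $\beta$; the resulting extension $S_0$ of $Q$ by $Z\cap S'$ is then shown to be isoclinic to $S$ and satisfies $Z(S_0)\subseteq S_0'$ (this is carried out, for instance, in Beyl and Tappe's book on group extensions and the Schur multiplier). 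Note finally that the statement in the paper is for arbitrary groups, where a minimal-order argument is not even available; this does not affect the paper's application, which has already been reduced to finite $p$-groups, but it is a further indication that minimality cannot be the engine of the proof.
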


\section{Proof of The Main Results}\label{main}
It was proved in \cite[Lemma 3.1]{kkj} that if $G$ is a finite group with $|G/Z(G)|=p^k$, then $G$ is isoclinic to a finite $p$-group. Since number of $z$-classes is invariant under isoclinism, hence it is enough to prove our results assuming that $G$ is a $p$-group. 
\begin{lemma}\label{lem1}
Let $G$ be a non-abelian $p$-group with $[G: Z(G)]=p^k$. If 
\begin{enumerate}
\item $G/Z(G)$ is elementary abelian, and
\item for all $z \in G \setminus Z(G)$, $Z(C_G(x))=\langle x, Z(G)\rangle$, 
\end{enumerate}
then $G$ has $\frac{p^k-1}{p-1} +1$ $z$-classes. 
\end{lemma}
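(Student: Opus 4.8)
The plan is to set up a bijection between the $z$-classes of $G$ lying outside $Z(G)$ and the one-dimensional subspaces of $G/Z(G)$, regarded as an $\mathbb{F}_p$-vector space. By hypothesis (1), $G/Z(G)$ is elementary abelian of order $p^k$, hence a vector space of dimension $k$ over $\mathbb{F}_p$, and the number of its one-dimensional subspaces (``lines'') is exactly $\frac{p^k-1}{p-1}$. The center $Z(G)$ forms a single $z$-class, since $C_G(z)=G$ for every $z\in Z(G)$ while $C_G(x)$ is a proper subgroup for every $x\notin Z(G)$; this class is disjoint from all others and supplies the $+1$. So it suffices to prove that the assignment $x\mapsto \langle \bar x\rangle$, where $\bar x$ denotes the image of $x$ in $G/Z(G)$, descends to a bijection between the $z$-classes in $G\setminus Z(G)$ and the lines in $G/Z(G)$.

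First I would show that elements mapping to the same line are $z$-equivalent; in fact they share a centralizer. Suppose $x,y\in G\setminus Z(G)$ with $\langle \bar x\rangle=\langle \bar y\rangle$. Because $G/Z(G)$ is elementary abelian, $\bar x$ has order $p$, so $\bar y=\bar x^{\,i}$ for some $i$ with $\gcd(i,p)=1$, i.e. $y=x^i z$ with $z\in Z(G)$. A short argument using $\gcd(i,p)=1$, so that $\bar x$ is in turn a power of $\bar x^{\,i}$, yields $C_G(y)=C_G(x^i)=C_G(x)$. Hence $x$ and $y$ lie in the same $z$-class, and the assignment is well defined on $z$-classes.

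The crucial step, and the place where hypothesis (2) enters, is the converse: if $x\sim_z y$ then $\langle \bar x\rangle=\langle \bar y\rangle$. Here I would exploit that the line $\langle \bar x\rangle$ is recovered intrinsically from the centralizer. Indeed $Z(G)\subseteq Z(C_G(x))$, so hypothesis (2) gives $Z(C_G(x))/Z(G)=\langle x,Z(G)\rangle/Z(G)=\langle \bar x\rangle$. If $C_G(y)=g^{-1}C_G(x)g$ for some $g\in G$, then taking centers gives $Z(C_G(y))=g^{-1}Z(C_G(x))g$, and passing to the quotient $G/Z(G)$ — where conjugation acts trivially because this quotient is abelian — gives $\langle \bar y\rangle=\langle \overline{g^{-1}xg}\rangle=\langle \bar x\rangle$. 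This is the heart of the matter: hypothesis (2) is precisely what makes the line a $z$-class invariant.

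Combining the two directions, $x\sim_z y$ if and only if $\langle \bar x\rangle=\langle \bar y\rangle$ for $x,y\in G\setminus Z(G)$, so the $z$-classes outside the center correspond bijectively to the $\frac{p^k-1}{p-1}$ lines of $G/Z(G)$. Adding the central class produces $\frac{p^k-1}{p-1}+1$ $z$-classes, as claimed. I expect the only delicate point to be the verification that $C_G(x^i)=C_G(x)$ when $\gcd(i,p)=1$, which becomes routine once one observes that $\bar x$ lies in the cyclic subgroup generated by $\bar x^{\,i}$; everything else is bookkeeping inside the vector space $G/Z(G)$.
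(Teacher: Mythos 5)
Your proof is correct, and it takes a somewhat different route from the paper's. The paper argues via Kulkarni's class-size formula stated in its preliminaries: since $G/Z(G)$ is abelian, every centralizer $C_G(x)$ is normal, so $[G:N_G(C_G(x))]=1$ and the $z$-class of $x$ equals $F'_x=\{y : C_G(y)=C_G(x)\}$; hypotheses (1) and (2) then pin down $F'_x=\langle x,Z(G)\rangle\setminus Z(G)$, of size $(p-1)|Z(G)|$, and the count follows by dividing $|G\setminus Z(G)|$ by this common class size. You instead bypass the orbit-counting machinery entirely and exhibit an explicit bijection between the non-central $z$-classes and the points of the projective space of lines in the $\mathbb{F}_p$-vector space $G/Z(G)$. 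The two arguments share the same computational core --- hypothesis (1) gives $C_G(x)=C_G(x^it)$ for $t\in Z(G)$ and $\gcd(i,p)=1$, and hypothesis (2) is what prevents a $z$-class from being any larger --- and both effectively show that $z$-equivalence collapses to equality of centralizers here (your handling of conjugation, by taking centers of conjugate centralizers and projecting to the abelian quotient, is the counterpart of the paper's observation that centralizers are normal). What your version buys is self-containedness and a conceptual explanation of the bound: the number $\frac{p^k-1}{p-1}$ appears not as a quotient of cardinalities but literally as the number of lines in $G/Z(G)$, with hypothesis (2) being exactly the statement that the line $Z(C_G(x))/Z(G)$ is a $z$-class invariant. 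What the paper's version buys is brevity given its preliminaries, and slightly more information: it computes the exact size $(p-1)|Z(G)|$ of every non-central $z$-class, which your bijection also yields but only implicitly (each class is the preimage of a punctured line).
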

\begin{proof}
Let $x\in G\setminus Z(G)$. Since $G/Z(G)$ is abelian, hence $C_G(x) \unlhd G$, i.e.  $[G:N(C_G(x)]=1$. Consider
 $$F'_x=\{y\in G : C_G(y)=C_G(x) \}.$$
	By definition $F'_x \subseteq Z(C_G(x)) \setminus Z(G)=\langle x,Z(G)\rangle \setminus Z(G) $ . Obviously $C_G(x)=C_G(xt)$ for $t\in Z(G)$. Also since exponent of $G/Z(G)$ is $p$, thus $C_G(x)=C_G(x^i)$ for any $i\in \{1,2,..,p-1\}$. Hence $\langle x,Z(G) \rangle \setminus Z(G) \subseteq F'_x$. Hence $$F'_x = Z(C_G(x)) \setminus Z(G) =\langle x,Z(G)\rangle \setminus Z(G)= xZ(G) \cup ...\cup x^{p-1}Z(G). $$
This implies,  $|F'_x|=(p-1)|Z(G)|$. Therefore, 
	$$|z-class\ of\ x|=[G:N_G(C_G(x))].|F'_x|= 1.(p-1)|Z(G)|.$$ 
Thus $G\setminus Z(G)$ contributes to a total $\frac{p^k-1}{p-1}$ number of $z$-classes of $G$. Clearly, $Z(G)$ is also a $z$-class. Hence,  $G$ has $\frac{p^k-1}{p-1} +1 $ $z$-classes. 
\end{proof}  

\begin{cor}
	Let $G$ be a non-abelian $p$-group with $|G/Z(G)|=p^k$. If
\begin{enumerate}
	\item  $G$ has no abelian subgroup of order exceeding $p |Z(G)|.$
	\item $G/Z(G)$ is an elementary abelian $p$-group.\end{enumerate}
	Then $G$ has $\frac{p^k-1}{p-1} +1 $ number of z-classes.
\end{cor}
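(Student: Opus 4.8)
The plan is to deduce this corollary directly from \lemref{lem1}. Hypothesis (2) of the corollary coincides verbatim with hypothesis (1) of \lemref{lem1}, so it suffices to verify that hypotheses (1) and (2) together force the remaining condition of the lemma, namely that $Z(C_G(x)) = \langle x, Z(G)\rangle$ for every $x \in G \setminus Z(G)$. Once this is established, \lemref{lem1} yields the stated count $\frac{p^k-1}{p-1}+1$ immediately.

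Fix $x \in G \setminus Z(G)$. First I would record the inclusion $\langle x, Z(G)\rangle \subseteq Z(C_G(x))$: every element of $C_G(x)$ commutes with $x$ by the definition of the centralizer, so $x \in Z(C_G(x))$, while $Z(G)$ is central in $G$ and hence central in $C_G(x)$. Next I would compute the order of $\langle x, Z(G)\rangle$. Since $Z(G)$ is normal and $G/Z(G)$ has exponent $p$ by hypothesis (2), the image of $x$ in $G/Z(G)$ has order $p$ (it is nontrivial because $x \notin Z(G)$), so $[\langle x, Z(G)\rangle : Z(G)] = p$ and therefore $|\langle x, Z(G)\rangle| = p\,|Z(G)|$.

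To obtain the reverse inclusion I would invoke hypothesis (1). The subgroup $Z(C_G(x))$ is abelian, being the center of a group, so by hypothesis (1) its order cannot exceed $p\,|Z(G)|$. Combining this with the inclusion from the previous paragraph gives
$$p\,|Z(G)| = |\langle x, Z(G)\rangle| \le |Z(C_G(x))| \le p\,|Z(G)|,$$
so all of the inequalities are equalities and $Z(C_G(x)) = \langle x, Z(G)\rangle$. Since $x \in G \setminus Z(G)$ was arbitrary, both hypotheses of \lemref{lem1} hold, and applying that lemma completes the proof.

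I do not anticipate a serious obstacle here; the argument is essentially a sandwiching of orders. The two points requiring attention are the order computation for $\langle x, Z(G)\rangle$, which is precisely where the exponent-$p$ hypothesis (2) enters, and the observation that hypothesis (1) is applicable to $Z(C_G(x))$ because the center of any group is an abelian subgroup of $G$.
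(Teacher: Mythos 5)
Your proposal is correct and takes essentially the same route as the paper: both deduce the corollary from \lemref{lem1} by using hypothesis (1) to bound an abelian subgroup containing $\langle x, Z(G)\rangle$, whose order is exactly $p\,|Z(G)|$ by the exponent-$p$ hypothesis. The only cosmetic difference is that the paper applies hypothesis (1) to $\langle x, y, Z(G)\rangle$ for a hypothetical $y \in C_G(x)\setminus\langle x, Z(G)\rangle$ and concludes the slightly stronger fact $C_G(x)=\langle x, Z(G)\rangle$, whereas you sandwich $Z(C_G(x))$ directly; both arguments verify the same hypothesis of the lemma.
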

\begin{proof}We claim 
	 $C_G(x)=\langle x, Z(G) \rangle$, and the rest follow from \lemref{lem1}. 

 Clearly $\langle x,Z(G) \rangle \subseteq C_G(x)$. Let us assume $y\notin \langle x,Z(G) \rangle$ and $y\in C_G(x)$. But then $\langle x,y,Z(G) \rangle $is an abelian subgroup and $|\langle x,y,Z(G) \rangle| > p |Z(G)| $. Thus $C_G(x)=\langle x,Z(G)\rangle$.
\end{proof}

\subsection{Proof of \thmref{mt}}
\begin{proof}
	Let $G$ be an non-abelian group of type $(n,1)$ and $G$ has $\frac{p^k-1}{p-1} +1 $ $z$-classes, where $[G:Z(G)]=p^k$. 	
(1) follows from the Theorem JKK. We prove (2). 

Let $x\in G\setminus Z(G)$, then for any $t\in Z(G)$, $C_G(x)=C_G(xt)$. Also for any $m$ relatively prime to order of $x$, we have 
	$$C_G(x)=C_G(\langle x\rangle )=C_G(\langle x^m \rangle )$$
	Thus $z$-class of $x$ contains, 
	$$xZ(G) \cup x^2Z(G) \cup.....\cup x^{p-1}Z(G),$$
and hence,  for all $x\in G \setminus Z(G)$, the size of $z$ class is at least $(p-1)|Z(G)|$.  Since $Z(G)$ is also a $z$-class, hence it follows from the hypothesis that  $G\setminus Z(G)$  constitute exactly $\frac{p^k-1}{p-1}\  z$-classes. Now, $|G\setminus Z(G)|=|G|-|Z(G)|$.  Hence, we must have for $x \in G \setminus Z(G)$, 
$$|z-class \ of\  x|=(p-1)|Z(G)|.$$

Now, it is clear that for $x \in G \setminus Z(G)$,  $\langle x, Z(G) \rangle \subseteq Z(C_G(x))$. Now let $y\in Z(C_G(x))$ such that $y\notin \langle x, Z(G) \rangle $.  Then a simple observation shows that $C_G(y) \supseteq C_G(x)$.   
	But since $G$ is of type $(n,1)$ we have $|C_G(x)|=|C_G(y)|$, thus we must have $C_G(x)=C_G(y)$. This implies $y\in (z-class\ of\ x)$ and since $y\notin \langle x, Z(G) \rangle $, thus $|z-class\ of\ x|>(p-1)|Z(G)|$, which is a contradiction. Thus our assumption is incorrect and  $Z(C_G(x))=\langle x, Z(G) \rangle$ for all $x\in G\setminus Z(G)$.

\medskip The converse part follows from \lemref{lem1}.  This completes the proof. 
\end{proof}

\subsection{Proof of \corref{est}}
\begin{proof}: Without loss of generality, using \thmref{hall}, we assume up to isoclinism that, $G$ is a group such  that $Z(G)\subseteq G'$. Let $G$ has $\frac{p^k-1}{p-1} +1 $ $z$-classes. Since $|G'|=p$ and in a non-abelian $p$-group $|Z(G)|>1$, it follows that  $G'=Z(G)$; hence, 
 $|G'|=|Z(G)|=p$. Therefore $G$ and $G/Z(G)$ have the same exponent. Thus $G$ is an extra-special $p$-group.

\medskip Conversely,  let $G$ be an extraspecial $p$-group. So,  $|Z(G)|=|G'|=p$. For $x\in G\setminus Z(G)$, define the map:
	$$\phi_x : G\rightarrow G'$$
	$$ g\mapsto[x,g]$$
As $G/Z(G)$ is elementary abelian, $\phi_x$ is a homomorphism with $ker(\phi_x)=C_G(x)$ and since $|G'|=p$, the map is also surjective. Hence we get 
	$$G/C_G(x)\cong G'$$
	Thus each $C_G(x)$ is a maximal subgroup of index $p$ in $G$. This shows that extraspecial $p$-groups are of type $(n,1)$. 

Now, we claim that $Z(C_G(x))=\langle x,Z(G) \rangle$. This follows from the fact that $G/Z(G)$ can be equipped with an alternating non-degenerate form induced by the commutator, see the proof of \cite[Theorem 3.14]{craven} or \cite[p. 83]{wilson}. The claim follows from \cite[Lemma 3.11]{craven} noting that $Z(C_G(x))/Z(G)$ is the radical of co-dimension one subspace $C_G(x)/Z(G)$. 

\medskip The result now follows from \thmref{mt}. 
\end{proof}

\end{document}